\newcommand{\norm}[1]{\left\Vert#1\right\Vert}
\newcommand{\abs}[1]{\left\vert#1\right\vert}
\newcommand{\D}{\mathbb{D}}
\newcommand{\rl}{{\mathbb{R}}}
\newcommand{\cx}{{\mathbb{C}}}
\newcommand{\dbar}{\overline{\partial}}
\theoremstyle{plain}
\newtheorem{theorem}{Theorem}[section]
\newtheorem{proposition}{Proposition}[section]
\newtheorem{lemma}{Lemma}[section]
\newtheorem{corollary}{Corollary}[section]
\theoremstyle{remark}
\newcommand{\Bp}{\mathbf{B}}
\newcommand{\Ht}{\mathbb{H}}
\newcommand{\BpHt}{\mathbf{B}_{\mathbb{H}}}
\newcommand{\Dd}{{\mathbb{D}^{\ast}\times\mathbb{D}}}
\newcommand{\BpDd}{\mathbf{B}_{\mathbb{D}^{\ast}\times\mathbb{D}}}
\title[Bergman Projection on the Hartogs Triangle]{$L^p$ Mapping Properties of the Bergman Projection on the Hartogs Triangle}
\author{Debraj Chakrabarti}
\address[Debraj Chakrabarti]{Central Michigan University, Department of
Mathematics, Mt. Pleasant, MI 48859}
\email{chakr2d@cmich.edu }
\author{Yunus E. Zeytuncu}
\address[Yunus E. Zeytuncu]{University of Michigan - Dearborn, Department of Mathematics and Statistics, Dearborn, MI 48128}
\email{zeytuncu@umich.edu}
\date{}
\subjclass[2010]{32A25, 32A07}
\thanks{ This work was partially supported by a grant from the Simons Foundation (\#316632 to Debraj Chakrabarti), and also by an Early Career internal grant from Central Michigan University to Debraj Chakrabarti.}
\keywords{Bergman projection, Hartogs triangle, $L^p$ regularity}
\begin{document}

\maketitle
\begin{abstract} We prove optimal estimates for the mapping properties  of the  Bergman projection on the  Hartogs 
triangle in weighted $L^p$ spaces when $p>\frac{4}{3}$, where the weight is a power of the distance to the singular boundary point. For $1<p\leq\frac{4}{3}$ we show that no such weighted estimates are possible.
\end{abstract}

\section{Introduction}
\subsection{Results}
In this note we  describe the $L^p$ regularity of the Bergman projection on the {\em Hartogs triangle}, the pseudoconvex
domain in $\cx^2$ defined as
\begin{equation}\label{eq-ht}
 \Ht=\Set{(z_1,z_2)\in \cx^2 |\abs{z_2}<\abs{z_1}<1},
\end{equation} for $1<p<\infty$. To state our results we use {\em weighted Bergman spaces} $A^p(\Ht,\omega)$, where
$\omega>0$ is a continuous function on $\Ht$, and a holomorphic function $f\in \mathcal{O}(\Ht)$ belongs to $A^p(\Ht,\omega)$ 
if $\norm{f}_{L^p(\Ht,\omega)}^p= \int_\Ht \abs{f}^p \omega dV <\infty,$
where $dV$ is Lebesgue measure of $\cx^2$.  Let $\delta_1$ be defined on $\cx^2$ by  
 \begin{equation}\label{eq-delta1} \delta_1(z_1,z_2)= \abs{z_1}.\end{equation}
 On $\Ht$, we have $ \frac{\phantom{\sqrt{}}1}{\sqrt{2}}\abs{z}\leq \delta_1(z) \leq \abs{z}$, i.e.     $\delta_1$ is comparable to the distance to the singular point 0. Let $\BpHt:L^2(\Ht)\to A^2(\Ht)$ denote the Bergman projection.
 We begin with the case $p\geq 2$:
\begin{theorem}\label{thm-bdd} If $p\geq 2$, the map $\BpHt$ is bounded  and surjective  from $L^p(\Ht)$ to $A^p(\Ht, \delta_1^{p-2})$.
\end{theorem}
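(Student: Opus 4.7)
The plan is to transfer the question to the bidisk through the biholomorphism $\Phi\colon\Ht\to\Dd$ defined by $\Phi(z_1,z_2)=(z_1,z_2/z_1)$, whose complex Jacobian is $1/z_1$. I would introduce the weighted pullback
\[
T_\Phi g(z_1,z_2) = \frac{1}{z_1}\,g(z_1,\,z_2/z_1),
\]
and verify by a direct change-of-variables computation that $T_\Phi$ gives two isometries: $L^p(\Dd)\to L^p(\Ht,|z_1|^{p-2})$ and $L^p(\Dd,|w_1|^{2-p})\to L^p(\Ht)$, for every $1<p<\infty$. Since the puncture $\{w_1=0\}$ is $L^2$-removable for holomorphic functions, $A^2(\Dd)=A^2(\mathbb{D}\times\mathbb{D})$ and thus $\BpDd=\mathbf{B}_{\mathbb{D}\times\mathbb{D}}$ as an integral operator, and the standard Bergman-kernel transformation law yields the intertwining identity $\BpHt\circ T_\Phi=T_\Phi\circ\mathbf{B}_{\mathbb{D}\times\mathbb{D}}$.

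Under this conjugation the asserted boundedness of $\BpHt$ is equivalent to that of $\mathbf{B}_{\mathbb{D}\times\mathbb{D}}\colon L^p(\mathbb{D}\times\mathbb{D},|w_1|^{2-p})\to L^p(\mathbb{D}\times\mathbb{D})$. For $p\ge 2$ the exponent $2-p$ is nonpositive, so $|w_1|^{2-p}\ge 1$ on $\mathbb{D}$ and the weighted space embeds contractively into the unweighted one. The desired estimate then follows at once from the classical $L^p$-boundedness of the bidisk Bergman projection for all $1<p<\infty$, proved by iterating the one-variable Forelli--Rudin estimate with Schur's test.

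Surjectivity is the main difficulty: one cannot simply invoke $\BpHt f=f$, because when $p\ge 4$ the target $A^p(\Ht,|z_1|^{p-2})$ strictly contains $L^p(\Ht)\cap A^p(\Ht)$ -- for instance $1/z_1$ lies in it but not in $L^p(\Ht)$. Given $f\in A^p(\Ht,|z_1|^{p-2})$, set $h=T_\Phi^{-1}f\in A^p(\mathbb{D}\times\mathbb{D})$, and use the explicit one-variable identity $\BpUd(|w|^{p-2}w^j) = \tfrac{2(j+1)}{2j+p}\,w^j$ to propose
\[
\tilde h(w_1,w_2)=h(w_1,w_2)+\frac{p-2}{2\,w_1}\int_0^{w_1}h(\zeta,w_2)\,d\zeta,\qquad \tilde g = |w_1|^{p-2}\tilde h.
\]
A termwise check on the monomial basis of $A^p(\mathbb{D}\times\mathbb{D})$ shows $\mathbf{B}_{\mathbb{D}\times\mathbb{D}}\tilde g=h$, while
\[
\int|\tilde g|^p|w_1|^{2-p}\,dV=\int|\tilde h|^p|w_1|^{(p-1)(p-2)}\,dV\le\|\tilde h\|_{L^p}^p,
\]
so it remains to bound $\tilde h$ in $L^p(\mathbb{D}\times\mathbb{D})$. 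For $p=2$ the correction term vanishes and $\tilde h=h$; for $p>2$ it reduces to the $L^p$-boundedness of the holomorphic Cesàro-type operator $\mathcal{C}f(z)=z^{-1}\int_0^z f(\zeta)\,d\zeta$ applied slicewise in $w_1$, which follows from the identity $\mathcal{C}f(z)=\int_0^1 f(tz)\,dt$ together with Minkowski's integral inequality (using $\int_0^1 t^{-2/p}dt<\infty$ exactly when $p>2$). Pushing $\tilde g$ forward by $T_\Phi$ then produces $g\in L^p(\Ht)$ with $\BpHt g=f$.

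The hard part is this surjectivity step: because for large $p$ the projection identity $\BpHt f=f$ is unavailable (as $f$ can escape $L^p(\Ht)$), one must manufacture a preimage with the correct growth at $z_1=0$. The specific modification $\tilde h$ is forced by the need to invert the multiplier $\tfrac{2(j+1)}{2j+p}$ in the monomial formula, and the resulting Cesàro/Hardy estimate is the only genuine analytic input beyond the classical polydisk bound.
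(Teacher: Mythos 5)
Your proof is correct, and while the reduction to the bidisc and the boundedness half coincide with the paper's argument (same biholomorphism, same observation that $|w_1|^{2-p}\geq 1$ for $p\geq 2$ plus the classical $L^p$ bound for $\mathbf{B}_{\D^2}$), your surjectivity argument is genuinely different and worth contrasting. The paper takes the preimage of $h=\sum a_{\mu\nu}w_1^\mu w_2^\nu$ to be $|w_1|^2\,Th$ with the $p$-independent multiplier $t_\mu=1+(\mu+1)^{-1}$, and the analytic work goes into a general lemma: any Taylor-coefficient multiplier of bounded variation is bounded on $A^p(\D^2)$, proved via the M.~Riesz theorem on $L^p$-convergence of Fourier partial sums on $\T$ together with summation by parts. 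You instead take the weight $|w_1|^{p-2}$, which forces the $p$-dependent multiplier $1+\tfrac{p-2}{2(\mu+1)}$, and you recognize it concretely as $I+\tfrac{p-2}{2}\mathcal{C}$ with $\mathcal{C}$ the slicewise Ces\`aro operator $\mathcal{C}h(w_1,w_2)=\int_0^1 h(tw_1,w_2)\,dt$, whose $A^p$-boundedness for $p>2$ follows from Minkowski's integral inequality and the dilation estimate $\|h(t\cdot,\cdot)\|_{L^p}\leq t^{-2/p}\|h\|_{L^p}$ (the case $p=2$ being trivial since the correction vanishes). Your route is more elementary and self-contained — it avoids the Riesz multiplier machinery entirely — at the price of being tied to one explicit multiplier, whereas the paper's Lemma on bounded-variation multipliers is a reusable general statement. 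It is amusing to note that the paper's own multiplier is exactly $I+\mathcal{C}$ in your language, so your Ces\`aro argument would also dispose of their operator $T$ directly. The only points needing the same (routine) care as in the paper are the justification of the termwise identity $\mathbf{B}_{\D^2}\bigl(|w_1|^{p-2}\tilde h\bigr)=h$ by interchanging the projection with the Taylor series, which is legitimate since $\tilde g\in L^2(\D^2)$, and the removability of the puncture identifying $A^p(\Dd)$ with $A^p(\D^2)$.
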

Note that the surjectivity of $\BpHt$ means that we have  the best possible 
estimates.
From Theorem~\ref{thm-bdd}  we  can recover the following folk result: 
\begin{corollary} \label{cor-folk}
(a)  If $\frac{4}{3}< p <{4}$, then $\BpHt$ is bounded (and surjective) from $L^p(\Ht)$ to $A^p(\Ht)$.

(b) If $p\geq 4$, then $\BpHt$ does not map $L^p(\Ht)$ into  $A^p(\Ht)$.
\end{corollary}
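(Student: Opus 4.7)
The plan splits into two parts.  Part (b) is a short application of the surjectivity clause of Theorem~\ref{thm-bdd} to an explicit test function, while part (a) requires a case split around $p=2$, handled via Theorem~\ref{thm-bdd} on $[2,4)$ and via duality on $(4/3,2)$.

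For (b), I would take $g(z)=1/z_1$, which is holomorphic on $\Ht$.  A direct computation in polar coordinates gives $\int_\Ht|g|^p\delta_1^{p-2}\,dV=\int_\Ht|z_1|^{-2}\,dV<\infty$ for every $p\geq 1$, so $g\in A^p(\Ht,\delta_1^{p-2})$.  On the other hand, $\int_\Ht|g|^p\,dV$ is a positive multiple of $\int_0^1 r^{3-p}\,dr$, which diverges precisely when $p\geq 4$.  For such $p$, the surjectivity in Theorem~\ref{thm-bdd} produces $f\in L^p(\Ht)$ with $\BpHt f=g\notin A^p(\Ht)$, establishing (b).

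For (a), the case $p=2$ is Theorem~\ref{thm-bdd} with the weight $\delta_1^0=1$.  For $2<p<4$, Theorem~\ref{thm-bdd} already places $\BpHt f$ in $A^p(\Ht,\delta_1^{p-2})$, so it remains to establish the continuous embedding $A^p(\Ht,\delta_1^{p-2})\hookrightarrow A^p(\Ht)$ on holomorphic functions.  Pulling back through the biholomorphism $\Phi:\Dd\to\Ht$, $(\zeta_1,\zeta_2)\mapsto(\zeta_1,\zeta_1\zeta_2)$, whose complex Jacobian has modulus $|\zeta_1|$, this embedding translates into
\[
\int_{\Dd}|\widetilde g|^p|\zeta_1|^2\,dV\;\leq\; C\int_{\Dd}|\widetilde g|^p|\zeta_1|^p\,dV
\]
for $\widetilde g=g\circ\Phi$ holomorphic on $\Dd$.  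Since $|\zeta_1|\leq 1$ and $p\geq 2$, the pointwise comparison of the two weights goes the wrong way, so holomorphicity of $\widetilde g$ must be used in an essential way: I would expand $\widetilde g$ in a Laurent series in $\zeta_1$, split into non-negative and negative frequency parts, and estimate each via classical weighted Bergman inequalities on the disk, the condition $p<4$ being exactly what keeps the negative-frequency piece integrable.

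For $\tfrac{4}{3}<p<2$, I would invoke duality.  The projection $\BpHt$ is self-adjoint on $L^2(\Ht)$, so the boundedness of $\BpHt:L^{p'}(\Ht)\to A^{p'}(\Ht)$ for $p'\in(2,4)$ obtained in the previous step dualizes to the boundedness of $\BpHt:L^p(\Ht)\to A^p(\Ht)$ for $p\in(\tfrac{4}{3},2)$.  Surjectivity in all cases follows from the identity $\BpHt g=g$ on $A^p(\Ht)\cap A^2(\Ht)$ together with a standard density argument.  The main technical hurdle is the weighted embedding in the preceding paragraph, which cannot be obtained by any pointwise comparison and really requires the holomorphic structure, either through the Laurent decomposition sketched above or through a Schur-test estimate on the Bergman kernel of $\Ht$ with test functions of the form $\delta_1^a$.
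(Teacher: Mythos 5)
Your proposal is correct and follows essentially the same route as the paper: part (b) via the test function $1/z_1$ together with the surjectivity clause of Theorem~\ref{thm-bdd}, and part (a) via Theorem~\ref{thm-bdd} on $[2,4)$ plus the self-adjointness/duality argument on $(\tfrac{4}{3},2)$. The weighted embedding you isolate as the technical crux is exactly the content of the paper's Proposition~\ref{prop-ap}, which is proved by the same mechanism you sketch (Laurent expansion, removability of the singularity along $\zeta_1=0$, and the integrability of $|\zeta_1|^{2-p}$ near the origin precisely when $p<4$), packaged there as a one-variable lemma on the punctured disc combined with Fubini and the open mapping theorem.
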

See  \cite{Chen13} for a generalization of part (a) to a class of domains closely related to $\Ht$.
Note that for $2\leq p < 4$, both Theorem~\ref{thm-bdd} and Corollary~\ref{cor-folk} apply.  This is because in 
this range the space $A^p(\Ht,\delta_1^{p-2})$ coincides with $A^p(\Ht)$ (cf. Proposition~\ref{prop-ap} below).

We now consider what happens under Bergman projection if $1<p\leq \frac{4}{3}$. For such $p$,  
it turns out that there is {\em absolutely no way} to control the Bergman projection of an $L^p$ function on the Hartogs triangle 
using a weight depending on the distance to the singularity:
\begin{theorem}\label{thm-unbdd} Let $1<p\leq \frac{4}{3}$, and let $\lambda>0$ be a continuous function on 
$(0,1]$. Then $\BpHt$ does not map $L^p(\Ht)$ into $A^p(\Ht,\lambda\circ\delta_1)$.
\end{theorem}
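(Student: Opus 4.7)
The plan is to construct an explicit one-parameter family $\{f_\alpha\}_{\alpha\in(0,2)}\subset L^p(\Ht)$ whose Bergman projections are fixed multiples of $z_1^{-1}$ with coefficient blowing up faster, as $\alpha\to 2^-$, than the $L^p(\Ht)$-norms of $f_\alpha$ themselves. The natural choice is
\[
 f_\alpha(z_1,z_2) \;:=\; \bar z_1\,|z_1|^{-\alpha-2}.
\]
First, a polar-coordinate calculation gives $\|f_\alpha\|_{L^p(\Ht)}^p = 2\pi^2/\bigl((4-p)-p\alpha\bigr)$, finite exactly when $\alpha<(4-p)/p$; since $(4-p)/p\ge 2$ for $p\leq 4/3$, the range $\alpha\in(0,2)$ is always admissible.

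The central step will be to compute $\BpHt f_\alpha$ in closed form. I would use the biholomorphism $\Phi:\Dd\to\Ht$, $\Phi(z,w):=(z,zw)$, and the associated isometry $U:L^2(\Ht)\to L^2(\Dd)$, $Uf(z,w):=z\,(f\circ\Phi)(z,w)$; from the classical transformation of Bergman kernels under $\Phi$ one checks that $U$ intertwines $\BpHt$ with $\BpDd$. Since $Uf_\alpha(z,w)=|z|^{-\alpha}$ is independent of $w$, the bidisc projection collapses to $\BpUd(|z|^{-\alpha})$, a standard disc integral (only the $k=0$ mode in the Taylor expansion of $K_{\Ud}$ survives the angular integration) equal to $\frac{2}{2-\alpha}$. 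Applying $U^{-1}$ then yields
\[
 \BpHt f_\alpha \;=\; \frac{2}{(2-\alpha)\,z_1}.
\]

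Set $I_\lambda := \int_0^1 r^{3-p}\lambda(r)\,dr$, so that $\|z_1^{-1}\|_{L^p(\Ht,\lambda\circ\delta_1)}^p = 2\pi^2 I_\lambda$. If $I_\lambda=\infty$ then $\BpHt f_\alpha$ already lies outside $A^p(\Ht,\lambda\circ\delta_1)$, which settles the theorem; otherwise the ratio of norms equals
\[
 \frac{\|\BpHt f_\alpha\|_{L^p(\Ht,\lambda\circ\delta_1)}^p}{\|f_\alpha\|_{L^p(\Ht)}^p} \;=\; \Bigl(\tfrac{2}{2-\alpha}\Bigr)^{\!p}\bigl((4-p)-p\alpha\bigr)\,I_\lambda,
\]
which diverges as $\alpha\to 2^-$ for every $p\in(1,4/3]$: at the endpoint $p=4/3$ the factor $(4-p)-p\alpha=\tfrac{4}{3}(2-\alpha)$ only partly absorbs the singularity, leaving a term of size $(2-\alpha)^{-1/3}$, and for $1<p<4/3$ it is bounded below by $4-3p>0$ so $(2-\alpha)^{-p}$ dominates. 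Either way $\BpHt:L^p(\Ht)\to A^p(\Ht,\lambda\circ\delta_1)$ cannot be bounded, yielding the theorem.

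The main obstacle I expect is the rigorous passage from the $L^2$-theoretic identity $U\BpHt U^{-1}=\BpDd$ to its use as an integral-operator identity on $L^p$, together with the Fubini-type interchanges implicit in the kernel calculation. Both go through once the constraint $\alpha<2$ is enforced, which guarantees absolute integrability. As an independent cross-check, one could instead substitute $\zeta=\Phi(\eta)$ directly in $\int K_{\Ht}(z,\zeta)f_\alpha(\zeta)\,dV(\zeta)$ and perform the $\eta_2$-integration first: the factor $(z_1-z_2\bar\eta_2)^{-2}$ collapses against $dA(\eta_2)$ to $\pi/z_1^2$, and the outer $\eta_1$-integral reduces to the same disc computation $\BpUd(|z|^{-\alpha})$.
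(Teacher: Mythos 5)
Your proposal is correct in substance but takes a genuinely different route from the paper's. The paper argues by duality: Lemma~\ref{lem-duality} converts the hypothetical boundedness of $\BpHt$ from $L^p(\Ht)$ to $A^p(\Ht,\lambda\circ\delta_1)$ into boundedness from $L^q(\Ht,(\lambda\circ\delta_1)^{1-q})$ to $A^q(\Ht)$ with $q\geq 4$, and then kills the latter with a \emph{single} test function $(\chi\circ\delta_1)\cdot\overline{z_1}$ --- continuous, supported away from the singular point, hence in $L^2(\Ht)$ and in every weighted space no matter what $\lambda$ is --- whose projection $Cz_1^{-1}$ fails to lie in $A^q(\Ht)$. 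You instead disprove boundedness directly via the extremal family $f_\alpha=\overline{z_1}\,|z_1|^{-\alpha-2}$, and your computations all check out: $\|f_\alpha\|^p_{L^p(\Ht)}=2\pi^2/((4-p)-p\alpha)$, $\BpHt f_\alpha=\frac{2}{2-\alpha}\,z_1^{-1}$ (only the $m=-1$, $n=0$ monomial survives the angular integration), and at the endpoint $p=4/3$ the factor $(4-p)-p\alpha=\frac{4}{3}(2-\alpha)$ indeed leaves a divergent $(2-\alpha)^{-1/3}$. The paper's route keeps every function inside $L^2(\Ht)$, where $\BpHt$ is a priori defined, and lets the cutoff $\chi$ absorb all dependence on $\lambda$; your route avoids the duality lemma entirely and produces an explicit quantitative blow-up rate.

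The one point you must shore up is the one you flag yourself: for $\alpha\geq 1$ the function $f_\alpha$ is \emph{not} in $L^2(\Ht)$, so $\BpHt f_\alpha$ is not defined by orthogonal projection, and the intertwining identity with $\BpDd$ is only an $L^2$ statement; since the blow-up requires $\alpha\to 2^-$, you cannot simply restrict to $\alpha<1$. The cleanest repair is to truncate: set $f_{\alpha,\epsilon}=f_\alpha\cdot\mathbf{1}_{\{|z_1|>\epsilon\}}\in L^2(\Ht)\cap L^p(\Ht)$, for which the same angular-integration argument gives $\BpHt f_{\alpha,\epsilon}=\frac{2(1-\epsilon^{2-\alpha})}{2-\alpha}\,z_1^{-1}$ while $\|f_{\alpha,\epsilon}\|_{L^p(\Ht)}\leq\|f_\alpha\|_{L^p(\Ht)}$; letting $\epsilon\to 0$ and then $\alpha\to 2^-$ recovers exactly your norm ratio without ever applying $\BpHt$ outside $L^2$. (Your alternative justification --- absolute convergence of the kernel integral for $\alpha<2$ --- also works, but requires writing down and estimating the explicit Bergman kernel of $\Ht$, which is more machinery than the truncation needs.) The case $I_\lambda=\infty$ is handled as you say, e.g.\ by any single $\alpha<1$.
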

This  pathological phenomenon does not seem to have been noticed before.

\subsection{The Bergman projection} We recall some basic definitions and facts regarding the Bergman projection operator.
 Let  $U\subset\cx^n$ be a bounded domain. The {\em Bergman space} $A^2(U)$ is  defined to be
the intersection $L^2(U)\cap \mathcal{O}(U)$ of the space $L^2(U)$ of square integrable functions on $U$ (with respect to the Lebesgue measure of $\cx^n$) with the space $\mathcal{O}(U)$ of holomorphic functions on $U$. By the Bergman inequality, $A^2(U)$ is a closed subspace of $L^2(U)$. The orthogonal projection operator $\Bp_U:L^2(U)\to A^2(U)$ 
is the {\em Bergman projection} associated with the domain $U$. It follows from the Riesz representation theorem that the Bergman projection is an integral operator with the kernel $\mathbb{B}_U(z,w)$ on $U\times U$ (called the { \em Bergman kernel}), i.e.
$
\Bp_Uf(z)=\int_U\mathbb{B}_U(z,w)f(w)dV(w)
$
for all $f\in L^2(U)$. We refer to \cite{KrantzSCVbook} for the general theory.

It is natural to consider the mapping properties
of $\Bp_U$ on other spaces of functions on $U$, for example $L^p$ spaces or Sobolev spaces.  A survey of mapping properties on  Sobolev spaces on smoothly bounded pseudoconvex domains can be found in \cite{BoasStraube99}. For  results on the regularity of the Bergman projection in $L^p$ space on bounded pseuodoconvex domains
we refer to the following articles and the references therein 
\cite{lanstein, PhongStein77, Barrett84, McNealStein94, KrantzPeloso08, BarrettSahutoglu11, ZeytuncuTran}.
In \cite{ehsani}, estimates were obtained for the Bergman projection on certain non-smooth domains in $L^p$ spaces with  weights which vanish at the singularities of the boundary. This is similar to our Theorem~\ref{thm-bdd}, but the Hartogs triangle is not among the domains to which the results of \cite{ehsani} apply.

\subsection{The Hartogs triangle} The  Hartogs triangle defined in  \eqref{eq-ht}
has remarkable geometric and function-theoretic properties, and is a classical source of counterexamples in complex analysis. The boundary $b\Ht$ of the  domain $\Ht$ has a serious singularity at the point 0, where $b\Ht$ cannot 
even be represented as a graph of a continuous function. The closure $\overline{\Ht}$ does not have a Stein neighborhood basis. The $\dbar$-problem on $\Ht$ is not globally regular, i.e., there is a $\dbar$-closed  $(0,1)$-form  $g\in \mathcal{C}^\infty_{0,1}(\overline{\Ht})$ which is smooth up to the boundary on $\Ht$, 
 such that {\em no solution} $u$ of the equation $\dbar u =g$ lies in $\mathcal{C}^\infty(\overline{\Ht})$ (see \cite{dbarhartogs}).
 
 A standard way of understanding function theory on domains with singular boundary is the use of weights which vanish or blow up at the singular points of the boundary. Consequently, one can obtain estimates which take into account the behavior
 of functions and forms near the singular points. For the Hartogs triangle, this was done in \cite{cshaw1}, where estimates for 
 the canonical solution of the $\dbar$-problem were obtained in weighted Sobolev spaces.
  While the Hartogs triangle has a non-Lipschitz boundary, as a complex manifold it has a 
very simple structure: it is biholomorphic to the product $\Dd$ of  the punctured unit disc $\mathbb{D}^*=\Set{z\in \cx|0<\abs{z}<1}$ and the unit disc $\mathbb{D}$.  Consequently, one can pull back problems on $\Ht$ to problems on $\Dd$, and one gets weights coming from the Jacobian factor. This technique was used in \cite{michelma1992, cshaw1} to study function theory on $\Ht$. Here we use the same method to study the mapping properties of $\BpHt$ in $L^p$ spaces.

This paper is organized as follows. In the following section, we give  a proof of Theorem~\ref{thm-unbdd}, using a
duality argument. After that, in Section~\ref{sec-dd} we consider the Bergman projection on the product $\Dd$ with
respect to a  radial weight on the first factor. In Proposition~\ref{prop-ddsurjective}, we obtain estimates on $\Dd$ using operator-theoretic methods relating to norm-convergence in $L^p$ spaces of Taylor series. In Section~\ref{sec-mainproof}
we prove Theorem~\ref{thm-bdd} using biholomorphic mapping of $\Ht$ with $\Dd$. Finally, in the last section, we 
deduce Corollary~\ref{cor-folk} from Theorem~\ref{thm-bdd} using a description of weighted Bergman spaces on $\Ht$ (see Proposition~\ref{prop-ap}).
 
 \subsection{Acknowledgements} We would like to thank the  American Institute of Mathematics, Palo Alto, for providing a very congenial atmosphere to work on this problem while we were participating in the workshop {\em Cauchy-Riemann Equations in Several Variables} in June 2014. We would also like to thank the anonymous referee for helpful recommendations.

\section{Proof of Theorem~\ref{thm-unbdd}}
For a continuous function $\omega>0$ on a domain $U$, the space $L^p(U,\omega)$ consists of functions $f$ on $U$ for which
\[ \norm{f}_{L^p(U,\omega)}^p = \int_U \abs{f}^p \omega dV<\infty,\]
where $dV$ is Lebesgue measure.
We use the standard duality of $L^p$ spaces to prove the following:
\begin{lemma}\label{lem-duality} Let $U\subset\cx^n$ be a bounded domain, $p>1$, and $\omega>0$ a weight function on $U$. 
Then if $\Bp_U$ is bounded from $L^p(U)$ to $A^p(U, \omega)$, then 
 $\Bp_U$ is  also bounded from $L^q(U, \omega^{1-q})$ to $A^q(U)$, where  $q$ is the conjugate exponent to $p$, i.e. $\frac{1}{p}+\frac{1}{q}=1$.
\end{lemma}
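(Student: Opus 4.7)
The plan is the standard duality-of-adjoints argument, exploiting that the Bergman projection is its own Hilbert space adjoint. The key algebraic identity is $-q/p = 1-q$ (coming from $1/p + 1/q = 1$), which, combined with Hölder's inequality applied to the splitting $|fg| = (|f|\omega^{1/p})(|g|\omega^{-1/p})$, identifies $L^q(U,\omega^{1-q})$ as the Banach dual of $L^p(U,\omega)$ under the \emph{unweighted} pairing $\langle f,g\rangle = \int_U f\bar g\, dV$. This identification is what lets us convert the hypothesized weighted $L^p$-bound into an unweighted $L^q$-bound.

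Next I would invoke the $L^2$-self-adjointness of $\Bp_U$: the kernel symmetry $\overline{\mathbb{B}_U(w,z)} = \mathbb{B}_U(z,w)$ together with Fubini gives $\int_U \Bp_U g \cdot \bar h\, dV = \int_U g \cdot \overline{\Bp_U h}\, dV$ for all $g,h\in L^2(U)$. Starting with $g,h \in L^\infty_c(U)$ (a subspace dense in all the $L^r$ spaces appearing, by continuity and positivity of $\omega$, and contained in $L^2(U)$), I would chain the pairing identity, the adjoint identity, the Hölder estimate from the first paragraph, and the hypothesized bound to produce
\begin{align*}
\left|\int_U \Bp_U g \cdot \bar h\, dV\right|
  &= \left|\int_U g \cdot \overline{\Bp_U h}\, dV\right| \\
  &\leq \|g\|_{L^q(U,\omega^{1-q})}\, \|\Bp_U h\|_{L^p(U,\omega)} \\
  &\leq C\,\|g\|_{L^q(U,\omega^{1-q})}\, \|h\|_{L^p(U)},
\end{align*}
where $C$ is the hypothesized operator norm of $\Bp_U$ on $L^p$. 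Taking the supremum over $h$ in the unit ball of $L^p(U)$ (it suffices to take $h\in L^\infty_c(U)$, which is dense in $L^p(U)$) yields $\|\Bp_U g\|_{L^q(U)} \leq C\|g\|_{L^q(U,\omega^{1-q})}$ on the dense class.

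Finally I would extend by density to all of $L^q(U,\omega^{1-q})$; the extension automatically takes values in $A^q(U)$, since norm limits in $L^q$ of holomorphic functions on a bounded domain remain holomorphic by the sub-mean-value property on relatively compact subdomains, and the extension agrees with the original $\Bp_U$ on $L^2(U) \cap L^q(U,\omega^{1-q})$. The only real obstacle is the bookkeeping required to justify the self-adjointness swap when $g$ or $h$ need not lie in $L^2(U)$ on its own; this is precisely why the argument is organized around the dense test class $L^\infty_c(U)$, where everything sits simultaneously in $L^2(U)$, $L^p(U)$, and $L^q(U,\omega^{1-q})$.
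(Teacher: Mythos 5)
Your proposal is correct and follows essentially the same route as the paper's proof: the identity $-q/p=1-q$, the Hölder splitting $|f\bar g|=(|f|\omega^{1/p})(|g|\omega^{-1/p})$, the $L^2$-self-adjointness of $\Bp_U$, and extension from a dense subclass. The only (cosmetic) difference is that you test against $L^\infty_c(U)$ rather than the paper's $L^q(U,\omega^{1-q})\cap L^2(U)$, which if anything handles the integrability bookkeeping a bit more cleanly.
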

\begin{proof} For $f\in L^q(U,\omega^{(1-q)})\cap L^2(U)$ we estimate
 $\norm{\Bp_U f}_{L^q(U)}$ by using duality of $L^p$ spaces on $U$.   Here and later we denote $\langle u,v \rangle= \int_{\Ht}u\overline{v}dV$ , which if $u,v\in L^2(\Ht)$ is the standard inner product of the Hilbert space
$L^2(\Ht)$. Note that on the bounded domain $U$, we have $L^p(U)\subset L^2(U)$ (for $p\geq2$) by Hölder's inequality.
We let $g\in L^p(U)$ range over the unit sphere $\norm{g}_{L^p(U)}=1$, so that we have
\begin{align*}
\norm{\Bp_U f}_{L^q(U)}&=\sup_{\norm{g}_{L^p(U)}=1} \abs{\left\langle \Bp_U f,g \right\rangle}\\
&=\sup_{\norm{g}_{L^p(U)}=1} \abs{\left\langle f,\Bp_U g \right\rangle} ,\text{ since $\Bp_U$ is self-adjoint}\\
&=\sup_{\norm{g}_{L^p(U)}=1} \abs{\left\langle f\cdot \omega^{-\frac{1}{p}},\left( \Bp_Ug \right)\cdot \omega^{\frac{1}{p}}\right\rangle} \\
&\leq\sup_{\norm{g}_{L^p(U)}=1} \left(\norm{f \cdot \omega^{-\frac{1}{p}}}_{L^q(U)}\cdot \norm{\left(\Bp_Ug\right)\cdot \omega^{\frac{1}{p}} }_{L^p(U)}\right),  \text{ by  H\"older's inequality}\\ 
&=\left(\sup_{\norm{g}_{L^p(U)}=1} \norm{\Bp_U g }_{L^p(U,\omega)}\right)\cdot \norm{f}_{L^q\left(U, \omega^{-\frac{q}{p}}\right)}
\end{align*}
since by assumption the first factor is finite, we have $\left(-\frac{q}{p}\right)=1-q$ and $L^q(U,\omega^{(1-q)})\cap L^2(U)$ is dense in $L^q(U,\omega^{(1-q)})$,  it follows that $\Bp_U$ is bounded from $L^q(U,\omega^{(1-q)})$ to $A^q(U)$.
\end{proof}

\begin{proof}[Proof of Theorem~\ref{thm-unbdd}]
If $1<p\leq\frac{4}{3}$, then the conjugate exponent $q\geq 4$. Therefore, by Lemma~\ref{lem-duality}, to prove the result we only need to show that $\BpHt$ does not map  $L^q(\Ht, (\lambda\circ\delta_1)^{1-q})$ into $A^q(\Ht)$. It therefore suffices to present a function $f\in L^2(\Ht)\cap L^q(\Ht, (\lambda\circ\delta_1)^{1-q})$ such that
$\BpHt f \not \in A^q(\Ht)$.

Let $\chi\geq 0$ be a continuous function on $[0,1]$ such that $\chi\equiv 0$ near 0, and $\chi \equiv 1$ near 1. Let $f$ be the function on $\overline{\Ht}$ given by $f(z_1,z_2)= \left(\chi\circ\delta_1\right)\cdot \overline{z_1},$
which is continuous on $\overline{\Ht}$ and vanishes near the singularity 0, so that $f\in  L^2(\Ht)\cap L^q(\Ht, (\lambda\circ\delta_1)^{1-q})$ for any choice of $\lambda$.  We claim that there is a constant $C>0$ such that  
$\BpHt(f)= C z_1^{-1}$.  Note that the system of monomials $\{z_1^mz_2^n\}$ for 
$m\geq -(n+1)$ and $n\geq 0$ forms a complete orthogonal set in $A^2(\Ht)$, so that it suffices to show that unless $m=-1$ and $n=0$, the function  $f$ is orthogonal 
to each $z_1^mz_2^n$ .  We have, using polar coordinates:
\begin{align*}\langle f, z_1^m z_2^n\rangle &= \int_\Ht \chi(\abs{z_1})\overline{z_1}\cdot\overline{z_1}^m \overline{z_2}^n dV\nonumber\\
&= \int_\Ht \chi(r_1)r_1e^{-i\theta_1}r_1^m e^{-im\theta_1}r_2^n e^{-in\theta_2}r_1r_2dr_1dr_2d\theta_1 d\theta_2\\
&=\left(\int_0^{2\pi}e^{-i(m+1)\theta_1}d\theta_1\right)\cdot \left( \int_0^{2\pi}e^{-in\theta_2}d\theta_2\right)\cdot
\int_\Delta \chi(r_1)r_1^{m+2} r_2^{n+1}dr_1dr_2,
\end{align*}
where $\Delta\subset\rl^2$ is the subset defined by $\{(r_1,r_2): 0\leq r_2<r_1<1\}$.
This integral vanishes unless $m=-1$, $n=0$.

To complete the proof, we verify that $\frac{1}{z_1}$ does not belong to $A^q(\Ht)$, if $q\geq 4$. We have,
\begin{align}\int_\Ht \abs{\frac{1}{z_1}}^qdV&= \int_\Ht\frac{1}{r_1^q}r_1r_2dr_1dr_2d\theta_1d\theta_2\nonumber\\
&= 4\pi^2 \int_{r_1=0}^1r_1^{1-q} \left(\int_{r_2=0}^{r_1}r_2dr_2\right)dr_1\nonumber\\
&=  2\pi^2 \int_0^1r_1^{3-q}dr_1,
\end{align}\label{eq-a4}
which diverges if $q\geq 4$.
\end{proof}


\section{Weighted $L^p$-regularity of the Bergman projection on $\Dd$.}\label{sec-dd}
We prove the following regularity result for the Bergman projection  on the bidisc. 
As in \eqref{eq-delta1}, we set $\delta_1(z)= \abs{z_1}$.
\begin{proposition}\label{prop-ddsurjective} For $p\geq 2$, the Bergman projection $\BpDd$ is bounded and 
surjective from $L^p(\Dd, \delta_1^{2-p})$ onto $A^p(\Dd)$.
\end{proposition}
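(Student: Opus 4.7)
Boundedness of $\BpDd$ is nearly automatic at this range of $p$. Since $|z_1|\leq 1$ on $\Dd$ and $2-p\leq 0$, the weight $|z_1|^{2-p}\geq 1$, so there is a continuous inclusion $L^p(\Dd,|z_1|^{2-p})\hookrightarrow L^p(\Dd)$. The bidisc Bergman kernel factors as $\mathbb{B}_{\mathbb{D}\times\mathbb{D}}(z,w)=\mathbb{B}_\mathbb{D}(z_1,w_1)\,\mathbb{B}_\mathbb{D}(z_2,w_2)$, and by iteration/Fubini from the classical 1D result, $\BpDd$ is bounded on $L^p(\Dd)$ for every $1<p<\infty$; its image is automatically holomorphic. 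Composing these two facts gives the boundedness half of the proposition.

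For the surjectivity half I would construct an explicit bounded right inverse. The case $p=2$ is vacuous (weight $\equiv 1$, and $\BpDd$ is then the orthogonal projection), so assume $p>2$ and fix $F\in A^p(\Dd)$. Since no negative power of $z_1$ is in $A^p(\Dd)$ once $p\geq 2$, the function $F$ has a genuine Taylor expansion $F=\sum_{m,n\geq 0}a_{mn}z_1^m z_2^n$ converging in $A^2(\Dd)$. A short polar calculation gives
\[
\BpUd\bigl(z_1^m|z_1|^{p-2}\bigr)\;=\;\frac{2(m+1)}{2m+p}\,z_1^m,
\]
which by the tensor structure of the bidisc projection yields $\BpDd(|z_1|^{p-2}z_1^m z_2^n)=\frac{2(m+1)}{2m+p}z_1^m z_2^n$. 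I would therefore define the candidate preimage
\[
f(z_1,z_2)\;:=\;|z_1|^{p-2}\,\widetilde F(z_1,z_2),\qquad \widetilde F(z_1,z_2)\;:=\;\sum_{m,n\geq 0}\frac{2m+p}{2(m+1)}\,a_{mn}\,z_1^m z_2^n,
\]
so that termwise the factor $\frac{2m+p}{2(m+1)}$ cancels and $\BpDd f=F$. Convergence of the series and legitimacy of the termwise projection are easily justified in $L^2(\Dd)$: since $|z_1|^{p-2}\leq 1$ one has $f\in L^2$, and the multiplier $\frac{2m+p}{2(m+1)}$ is bounded, so $\widetilde F\in A^2$.

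It remains to bound $f$ in the weighted $L^p$ norm. The weighted norm itself is straightforward:
\[
\int_\Dd |f|^p|z_1|^{2-p}\,dV\;=\;\int_\Dd |z_1|^{(p-1)(p-2)}|\widetilde F|^p\,dV\;\leq\;\|\widetilde F\|_{A^p(\Dd)}^p,
\]
since $(p-1)(p-2)\geq 0$ and $|z_1|\leq 1$. The main obstacle is therefore to show $\|\widetilde F\|_{A^p(\Dd)}\lesssim \|F\|_{A^p(\Dd)}$. I would write $\widetilde F=F+\tfrac{p-2}{2}\,SF$, where $S$ is the Ces\`aro/Volterra-type operator acting on the first factor via $(SG)(z_1,z_2)=\int_0^1 G(tz_1,z_2)\,dt$ (equivalently $S\bigl(\sum a_{mn}z_1^m z_2^n\bigr)=\sum \frac{a_{mn}}{m+1}z_1^m z_2^n$). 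Minkowski's integral inequality together with the elementary dilation estimate $\|G(t\,\cdot,\cdot)\|_{L^p(\Dd)}\leq t^{-2/p}\|G\|_{A^p(\Dd)}$ then yields
\[
\|SG\|_{A^p(\Dd)}\;\leq\;\int_0^1 t^{-2/p}\,dt\cdot\|G\|_{A^p(\Dd)}\;=\;\frac{p}{p-2}\,\|G\|_{A^p(\Dd)},
\]
which is finite precisely because $p>2$. This is the delicate point of the argument: it barely fails at $p=2$ (the integral diverges), but the $p=2$ case requires no such machinery since the weight disappears. With the bound on $S$ in hand, $\|\widetilde F\|_{A^p}\lesssim \|F\|_{A^p}$ and the surjectivity follows.
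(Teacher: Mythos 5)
Your proposal is correct, and the boundedness half is identical to the paper's (the weight is $\geq 1$, so $L^p(\Dd,\delta_1^{2-p})$ embeds in $L^p(\D^2)$, where the product structure of the kernel gives $L^p$-boundedness). The surjectivity half, however, takes a genuinely different route. The paper builds the right inverse as $Uf=\abs{w_1}^2\,Tf$ with the coefficient multiplier $t_\mu=\tfrac{\mu+2}{\mu+1}$, and the real work is showing that any multiplier sequence of bounded variation acts boundedly on $A^p(\D^2)$; this rests on the uniform boundedness of the partial-sum operators $S_N$ in the $w_1$-variable, which in turn comes from the M.~Riesz theorem on $L^p(\T)$-convergence of Fourier series, followed by an Abel-summation argument. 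You instead take the $p$-dependent weight $\abs{z_1}^{p-2}$, which produces the multiplier $\tfrac{2m+p}{2(m+1)}=1+\tfrac{p-2}{2(m+1)}$, and you bound the resulting operator by recognizing $\tfrac{1}{m+1}$ as the dilation average $SG=\int_0^1 G(t\,\cdot,\cdot)\,dt$ and applying Minkowski's integral inequality with the change-of-variables estimate $\norm{G(t\,\cdot,\cdot)}_{L^p}\le t^{-2/p}\norm{G}_{L^p}$. Your argument is more elementary and self-contained (no Fourier-analytic input), and the apparent blow-up $\tfrac{p}{p-2}$ as $p\to 2^+$ is harmless since it is multiplied by $\tfrac{p-2}{2}$, with $p=2$ handled trivially; the price is that it exploits the special integral form of this particular multiplier, whereas the paper's Lemma~3.2 is a general statement about bounded-variation multipliers that works uniformly without case-splitting. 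One small point to tighten: the assertion that $F\in A^p(\Dd)$ has a genuine Taylor (rather than Laurent) expansion deserves the short argument that the negative-index Laurent coefficients are killed by square-integrability near $\{z_1=0\}$ (this is the content of the paper's Lemma~5.1); with that noted, the termwise computation of $\BpDd f$ is legitimately carried out in $L^2$ exactly as you say.
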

Note that  when $p\geq 2$, $A^p(\Dd)$ is identical to the space $A^p(\D^2)$, since each holomorphic function in $L^p(\Dd)$ extends holomorphically  to $\D^2$ (see the proof of Lemma~\ref{lem-ap} below).

For a non-negative integer $N$, we define a map 
$S_N:\mathcal{O}(\D^2)\to \mathcal{O}(\D^2)$ in the following way. For $f\in \mathcal{O}(\D^2)$, with Taylor representation
\begin{equation}\label{eq-f}
f(w_1,w_2)= \sum_{\mu,\nu=0}^\infty a_{\mu,\nu}w_1^\mu w_2^\nu,
\end{equation}
we define the function  $S_Nf$ by setting
\begin{equation}\label{eq-sn}
S_Nf(w_1,w_2)= \sum_{\mu=0}^N\sum_{\nu=0}^\infty a_{\mu,\nu}w_1^\mu w_2^\nu,
\end{equation}
i.e., $S_N$ is the $N$-th partial sum in the $w_1$ variable. 
We have the following analog of a result of Zhu (\cite[Corollary~4]{zhu}):

\begin{lemma}\label{lem-zhu}For $p\geq 1$ and for any $f\in A^p(\D^2)$, as $N$ goes to infinity, $S_N f \to f$
in the $L^p$-norm.

\end{lemma}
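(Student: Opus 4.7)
The plan is to reduce Lemma~\ref{lem-zhu} to its one-variable analog, namely Zhu's Corollary~4 in \cite{zhu}: for $p \geq 1$ and $g \in A^p(\D)$ with Taylor expansion $g(w) = \sum_{\mu \geq 0} b_\mu w^\mu$, the partial sums $s_N g(w) := \sum_{\mu=0}^N b_\mu w^\mu$ converge to $g$ in $L^p(\D)$. Applying the Banach--Steinhaus theorem to this pointwise convergence on the Banach space $A^p(\D)$ then yields a uniform operator-norm bound $\|s_N g\|_{L^p(\D)} \leq C_p \|g\|_{L^p(\D)}$ with $C_p$ independent of $N$.

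The first step is to fiber $\D^2$ over the second coordinate. For fixed $w_2 \in \D$, the slice $g_{w_2}(w_1) := f(w_1, w_2)$ is holomorphic in $w_1$, and comparing \eqref{eq-f} with \eqref{eq-sn} shows that the one-variable $N$-th Taylor partial sum of $g_{w_2}$ is precisely $S_N f(\cdot, w_2)$. By Fubini,
\begin{equation*}
\|S_N f - f\|_{L^p(\D^2)}^p = \int_\D \|s_N g_{w_2} - g_{w_2}\|_{L^p(\D)}^p\, dV(w_2),
\end{equation*}
and $g_{w_2} \in A^p(\D)$ for $dV$-almost every $w_2 \in \D$.

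For each such $w_2$, Zhu's one-variable result forces pointwise convergence of the integrand to $0$, while the uniform bound supplies the integrable majorant $(1+C_p)^p \|g_{w_2}\|_{L^p(\D)}^p$, whose integral in $w_2$ equals $(1+C_p)^p \|f\|_{L^p(\D^2)}^p < \infty$. Dominated convergence then completes the argument.

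The main obstacle is really packaged inside Zhu's one-variable statement. For $p > 1$ it follows from the standard boundedness of the Bergman projection on $A^p(\D)$ (equivalently, from the M.~Riesz theorem on Fourier partial sums in $L^p(\T)$), but the endpoint $p=1$ is more delicate because the Bergman projection fails to be bounded on $L^1$. Once that input is taken from \cite{zhu}, the passage to two variables is just the routine Fubini and dominated-convergence argument outlined above.
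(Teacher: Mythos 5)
Your argument is correct in the range $p>1$, and it reaches the same conclusion as the paper by a mildly different route. The paper works directly with the two-variable operator: it restricts $f$ to the tori $\{\abs{w_1}=r_1,\ \abs{w_2}=r_2\}$, applies the M.~Riesz inequality for partial sums of Fourier series in $L^p(\T)$ in the $\theta_1$-variable for each fixed $(r_1,r_2,\theta_2)$, integrates in polar coordinates to obtain a uniform bound $\norm{S_N}_{A^p(\D^2)\to A^p(\D^2)}\leq C$, and then concludes by the standard ``uniform boundedness plus density of polynomials'' principle. You instead take the one-variable convergence theorem as a black box, recover the uniform bound on $A^p(\D)$ by Banach--Steinhaus, and pass to two variables by slicing, Tonelli, and dominated convergence. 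Both proofs ultimately rest on the same harmonic-analysis input (the M.~Riesz theorem); yours trades the density argument on $\D^2$ for the a.e.-slice and majorant bookkeeping, which you carry out correctly. One small remark: the paper's proof explicitly produces the uniform bound for $S_N$ on $A^p(\D^2)$, which is quoted again in the proof of Lemma~\ref{lem-tbdd}; your argument also yields it (apply Banach--Steinhaus on $A^p(\D^2)$, or integrate the one-dimensional bound over slices), so nothing downstream is lost.

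The one genuine issue is the endpoint $p=1$, and it afflicts the paper's own proof in exactly the same way. The M.~Riesz theorem and Zhu's Corollary~4 are $1<p<\infty$ statements; the partial-sum operators are not uniformly bounded on $A^1(\D)$, and there exist $g\in A^1(\D)$ whose Taylor series do not converge in norm (indeed one of the points of \cite{zhu} is that a logarithmic weight is needed to rescue the $p=1$ case). Taking $f(w_1,w_2)=g(w_1)$ then shows the lemma itself fails at $p=1$. So your deferral of the endpoint to \cite{zhu} does not close that case, but nothing can: the statement should be read with $p>1$, and only $p\geq 2$ is ever used in the paper.
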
\begin{proof}
Denote by $\mathbb{T}$ the unit circle $\{\abs{z}=1\}$ in 
the plane and by $\mathbb{T}^2$ the two-torus $\{(z_1,z_2)\in\cx^2| \abs{z_1}=\abs{z_2}=1\}$.
Suppose that $g\in H^p(\mathbb{T}^2)$, the Hardy space on $\mathbb{T}^2$. This means that $g$ 
admits a holomorphic extension to $\D^2$ and has boundary values in $L^p(\mathbb{T}^2)$. For almost all $z_2\in \mathbb{T}$, the function $z_1\mapsto g(z_1,z_2)$ is in $L^p(\mathbb{T})$, and note that the $N$-th partial sum of 
the Fourier series representation of $g(\cdot, z_2)$ is precisely $S_Ng(\cdot, z_2)$. From the classical Riesz theory of 
convergence of Fourier series in $L^p(\mathbb{T})$ (see e.g. \cite[Section~3.5]{grafakos}), we conclude that there is a constant $C_0$, independent of 
the function $g$ such that we have for almost all $z_2\in \mathbb{T}$ that
\begin{equation}\label{eq-riesz}
\int_0^{2\pi}\abs{S_N g(e^{i\theta},z_2)}^p d\theta \leq C_0 \int_0^{2\pi} \abs{g(e^{i\theta},z_2)}^p d\theta.
\end{equation}

Following a standard functional-analytic argument (cf.~\cite[Proposition 1]{zhu}), to prove Lemma~\ref{lem-zhu}, it is sufficient to show that there is a
constant $C>0$ such that $\norm{S_N}\leq C$ for all non-negative integers $N$,
 where we think of $S_N$ as an operator from $A^p(\D^2)$ to itself, and $\norm{\cdot}$ is the operator norm. Now
\begin{align*}
\norm{S_Nf}_{L^p(\D^2)}^p &= \int_\D\int_\D \abs{S_N f(w_1,w_2)}^p dV(w_1) dV(w_2)\\
&= \int_0^1\int_0^1 r_1r_2 \left(\int_0^{2\pi}\int_0^{2\pi}\abs{S_Nf(r_1e^{i\theta_1}, r_2e^{i\theta_2})}^p d\theta_1 d\theta_2 \right) dr_1 dr_2\\
&\leq C_0 \int_0^1\int_0^1 r_1r_2 \left(\int_0^{2\pi}\int_0^{2\pi}\abs{f(r_1e^{i\theta_1}, r_2e^{i\theta_2})}^p d\theta_1 d\theta_2 \right) dr_1 dr_2\\
&= C_0 \norm{f}_{L^p(\D^2)}^p,
\end{align*}
where in the last-but-one line, we have used \eqref{eq-riesz}.
\end{proof}
We now give a sufficient condition for an operator on $A^p(\D^2)$  defined by a certain multiplier sequence on the Taylor coefficients to be bounded. Let $\{t_\mu\}_{\mu=0}^\infty$ be a sequence of complex numbers such that 
\begin{equation}\label{eq-tbv}\sum_{\mu=0}^\infty\abs{t_{\mu+1}-t_\mu}<\infty. \end{equation}
Writing $ t_N = \sum_{\mu=0}^{N-1} (t_{\mu+1}-t_\mu)+t_0,$
and using \eqref{eq-tbv} we see that the limit  $\lim_{N\to \infty} t_N$ exists, and in particular, the sequence $\{t_\mu\}_{\mu=0}^\infty$ is bounded.
It follows that for a holomorphic $f$ on $\D^2$, represented as in \eqref{eq-f}, the new function
\begin{equation}\label{eq-t}
Tf(w_1,w_2)= \sum_{\mu,\nu=0}^\infty t_\mu a_{\mu,\nu}w_1^\mu w_2^\nu
\end{equation}
is also a holomorphic function on $\D^2$. 
\begin{lemma}\label{lem-tbdd}
The map $T$ is bounded from $A^p(\D^2)$ to itself.
\end{lemma}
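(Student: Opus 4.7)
The plan is to apply Abel's summation-by-parts to decompose $T$ as an absolutely convergent series of the uniformly bounded partial-sum operators $S_N$ from Lemma~\ref{lem-zhu}. The essential analytic input is the bound extracted from the proof of that lemma: there is a constant $M$, independent of $N$ and $f$, with $\norm{S_N f}_{L^p(\D^2)} \leq M \norm{f}_{L^p(\D^2)}$ for every $f \in A^p(\D^2)$. Beyond this, only the hypothesis \eqref{eq-tbv} is used.

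Fix $f \in A^p(\D^2)$ with expansion \eqref{eq-f}, and set $c_\mu(w_1,w_2) = w_1^\mu \sum_\nu a_{\mu,\nu} w_2^\nu$, so that $c_\mu = S_\mu f - S_{\mu-1} f$ with the convention $S_{-1} f \equiv 0$. Summation by parts yields
\begin{equation*}
\sum_{\mu=0}^N t_\mu c_\mu = t_N S_N f - \sum_{\mu=0}^{N-1} (t_{\mu+1} - t_\mu) S_\mu f.
\end{equation*}
By \eqref{eq-tbv} the sequence $\{t_\mu\}$ is Cauchy; let $t_\infty := \lim t_\mu$. The uniform bound on $\norm{S_\mu f}_{L^p}$ combined with \eqref{eq-tbv} forces the series $\sum_{\mu=0}^\infty (t_{\mu+1} - t_\mu) S_\mu f$ to converge absolutely in $L^p(\D^2)$, while Lemma~\ref{lem-zhu} gives $S_N f \to f$ and hence $t_N S_N f \to t_\infty f$ in $L^p$. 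The right-hand side of the displayed identity therefore converges in $L^p(\D^2)$ to $g := t_\infty f - \sum_{\mu=0}^\infty (t_{\mu+1} - t_\mu) S_\mu f$. The left-hand side, on the other hand, is the $N$-th Taylor partial sum in $w_1$ of the holomorphic function $Tf$ defined by \eqref{eq-t}, so it converges pointwise on $\D^2$ to $Tf$. Extracting an a.e.\ convergent subsequence of the $L^p$-convergent side identifies $g$ with $Tf$ a.e., placing $Tf$ in $A^p(\D^2)$, and the triangle inequality delivers
\begin{equation*}
\norm{Tf}_{L^p(\D^2)} \leq \left( |t_\infty| + M \sum_{\mu=0}^\infty |t_{\mu+1} - t_\mu| \right) \norm{f}_{L^p(\D^2)}.
\end{equation*}

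I anticipate no real obstacle: the bounded-variation hypothesis \eqref{eq-tbv} is precisely calibrated so that the Abel telescoping converges absolutely in operator norm against the uniformly bounded family $\{S_N\}$. The only step needing a brief justification is the reconciliation of the pointwise and $L^p$ limits of the partial sums, which is painless because both limits come from the same sequence of Taylor polynomials of $Tf$ in the $w_1$-variable.
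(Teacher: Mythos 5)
Your argument is correct and is essentially the paper's own proof: the same Abel summation-by-parts decomposition $S_N Tf = t_N S_N f - \sum_{\mu=0}^{N-1}(t_{\mu+1}-t_\mu)S_\mu f$, the same use of the uniform bound on $\norm{S_N}$ and of \eqref{eq-tbv} to get $L^p$-convergence of the right-hand side. The only (harmless) difference is in identifying the $L^p$-limit with $Tf$: you pass to an a.e.\ convergent subsequence and compare with the pointwise limit of the Taylor partial sums, whereas the paper applies the continuous operator $S_M$ to the limit and matches Taylor coefficients.
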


\begin{proof} Let $f\in A^p(\D^2)$.  We will first show that the partial sums $S_NTf$ of $Tf$ (defined using the operator $S_N$ of \eqref{eq-sn})
converge in $A^p(\D^2)$. Indeed, using summation by parts, we can write
\[ S_N Tf = t_N \cdot S_N f - \sum_{k=0}^{N-1}(t_{k+1}-t_k)S_k f.\]
Note that each of the factors $t_N$ and $S_N f$ in the first term converges to a limit as $N\to\infty$ so that the first term 
converges to a limit in $L^p(\D^2)$ as $N\to \infty$.  Let $C>0$ be such that $\norm{S_N f}_{L^p(\D^2)}\leq C$ for all
$N$. Such $C$ exists since $S_N f\to f$ in $L^p(\D^2)$. Therefore, we see that
\begin{align*} \sum_{k=0}^\infty \norm{ (t_{k+1}-t_k)S_k f}_{L^p(\D^2)}&\leq C \sum_{k=0}^\infty\abs{t_{k+1}-t_k}\\&<\infty.
\end{align*}
This shows the second term is the $(N-1)$-th partial sum of an absolutely convergent series in a Banach space,  and therefore has a limit as $N\to \infty$.

Now let $g=\lim_{N\to\infty}S_N Tf$, where the limit is in the topology of  $A^p(\D^2)$. Let $M\geq 0$ be an integer. By the continuity of  $S_M$ on 
$A^p(\D^2)$ established in the proof of Lemma~\ref{lem-zhu},  it follows that $S_Mg= S_M \left(\lim_{N\to\infty}S_N Tf\right)= \lim_{N\to\infty}S_MS_N Tf= S_M Tf$. Since this is true for each $M$, we have $Tf=g$. Since $g\in A^p(\D^2)$, the operator $T$ maps
$A^p(\D^2)$ boundedly to itself.
\end{proof}
\begin{proof}[Proof of Proposition~\ref{prop-ddsurjective}] We first prove that $\BpDd$ is bounded  from $L^p(\Dd, \delta_1^{2-p})$ to $A^p(\D^2)$ (note that $A^p(\D^2)$ is identical to $A^p(\Dd)$.)  Recall that the Bergman projection on the unit disc is bounded from $L^p(\D)$ to $A^p(\D)$ (see e.g. \cite{zhubook,Forellirudin74}). Using the fact that the Bergman kernel of the product $\D^2$ is the tensor product of the 
Bergman kernels of the factors, by  a simple application of Fubini's theorem it follows that the Bergman projection on the bidisc $\D^2$ is bounded from $L^p(\D^2)$ to $A^p(\D^2)$.
On $\Dd$ we have $0<\delta_1<1$, and therefore for $p\geq 2$, we obtain,
\[ \norm{f}_{L^p(\D^2)}^p =\int_{\D^2} \abs{f}^p dV
\leq \int_{\D^2}\abs{f}^p \delta_1^{2-p}dV
=\norm{f}_{L^p(\D^2,\delta_1^{2-p})}^p,
\]
so that $L^p(\Dd,\delta_1^{2-p})=L^p(\D^2,\delta_1^{2-p})$ is continuously embedded in $L^p(\D^2)$. But as noted above,
the Bergman projection on $\D^2$ is bounded from $L^p(\D^2)$ to $A^p(\D^2)$, so that by composing the two maps, we see that 
$\BpDd$ is bounded from $L^p(\Dd,\delta^{2-p})$ to $A^p(\D^2)=A^p(\Dd)$.

To show that $\BpDd$ surjective onto $A^p(\Dd)$, we construct an operator $U:A^p(\Dd)\to L^p(\Dd,\delta^{2-p})$ such that $\BpDd\circ U$ is the identity on $A^p(\Dd)$, i.e, for $f\in A^p(\Dd)$, the function $Uf$ 
is projected by $\BpDd$ to $f$, which shows the surjectivity of $\BpDd$. 

To construct $U$, in the equation \eqref{eq-t} we let  $t_\mu= 1+(\mu+1)^{-1}$,
which being monotone and bounded certainly satisfies \eqref{eq-tbv}. Further, a computation shows that
\begin{equation}\label{eq-tmu}
\frac{\left\langle \abs{w_1}^2 w_1^\mu, w_1^m\right\rangle_\D}{\left\langle w_2^m, w_2^m\right\rangle_\D} = \frac{1}{t_\mu}\delta_{\mu m}.
\end{equation}
Defining $T$ as in \eqref{eq-t}, we let 
$ Uf = \delta_1^2\cdot Tf.$ Then $U$ maps $A^p(\D^2)$ to $L^p(\Dd, \delta_1^{2-p})$. Indeed:
\begin{align*}
\norm{Uf}^p_{L^p(\Dd, \delta_1^{2-p})}&= \int_\Dd \abs{\delta_1^2 Tf}^p \delta_1^{2-p}dV\\
&= \int_\Dd \delta_1^{2+p}\abs{Tf}^p dV\\
&\leq \int_{\Dd}\abs{Tf}^p dV\\
&\leq C \norm{f}_{L^p(\D^2)}^p<\infty,
\end{align*}
where in the last line we have used Lemma~\ref{lem-tbdd}. 
Further, since $\{w_1^mw_2^n\}_{m,n=0}^\infty$ is an orthogonal set in $L^2(\D^2)$, we see that
\begin{align*} \BpDd(Uf)(w_1,w_2)&= \sum_{m,n=0}^\infty\left\langle\abs{w_1}^2 \sum_{\mu,\nu=0}^\infty t_\mu a_{\mu,\nu}w_1^\mu w_2^\nu, w_1^m w_2^n\right\rangle_\Dd\cdot \frac{w_1^mw_2^n}{\left\langle w_1^m, w_1^m\right\rangle_\D\cdot \left\langle w_2^n , w_2^n\right\rangle_\D}\\
&= \sum_{m,n=0}^\infty \sum_{\mu,\nu=0}^\infty t_\mu a_{\mu,\nu}\left\langle\abs{w_1}^2 w_1^\mu w_2^\nu, w_1^m w_2^n\right\rangle_\Dd\cdot \frac{w_1^mw_2^n}{\left\langle w_1^m, w_1^m\right\rangle_\D\cdot \left\langle w_2^n , w_2^n\right\rangle_\D}\\
&= \sum_{m,n=0}^\infty \sum_{\mu,\nu=0}^\infty t_\mu a_{\mu,\nu}
\frac{\left\langle\abs{w_1}^2 w_1^\mu, w_1^m \right\rangle_\D}{\left\langle w_1^m, w_1^m\right\rangle_\D} \cdot
\frac{\left\langle w_2^\nu,  w_2^n\right\rangle_\D}{\left\langle w_2^n , w_2^n\right\rangle_\D}w_1^mw_2^n\\
&= \sum_{m,n=0}^\infty \sum_{\mu,\nu=0}^\infty t_\mu a_{\mu,\nu}\cdot \frac{1}{t_\mu}\delta_{\mu m}
\delta_{\nu n}w_1^mw_2^n, \quad \text{ using \eqref{eq-tmu}}\\
&=\sum_{m,n=0}^\infty a_{m,n}w_1^mw_2^n=f(w_1,w_2).
\end{align*}
\end{proof}


\section{Proof of Theorem~\ref{thm-bdd}.}\label{sec-mainproof}

\subsection{Change of Variables}Let $U$ and $V$ be two bounded domains in $\cx^n$, and let $\Phi:U\to V$ be a biholomorphic map. We set 
\[u = \abs{\det \Phi'}
\text{ and }  v = \abs{(\det \Phi')\circ\Phi^{-1}}.\]

We prove the following:

\begin{lemma}\label{lem-2}Let $\alpha$ be a real number. The following are equivalent:
\begin{enumerate}
\item $\Bp_U$ is bounded and surjective from $L^p(U, u^{2-p})$ to 
$A^p(U,u^{2-p+\alpha})$.
\item  $\Bp_V$ is bounded and surjective from  $L^p(V)$ to $A^p(V, v^\alpha)$.
\end{enumerate}
\end{lemma}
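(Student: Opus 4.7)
The plan is to transport the entire picture from $V$ to $U$ via a single ``weighted pullback'' operator and then read off the equivalence from the transformation rule for the Bergman projection. Define
\[ T_\Phi g \,=\, (g\circ\Phi)\cdot \det\Phi' \qquad\text{for $g$ defined on $V$.} \]
Since $\Phi$ is holomorphic, $T_\Phi$ preserves holomorphy, and its inverse is (up to the obvious Jacobian factor) the analogous pullback along $\Phi^{-1}$.

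\textbf{Step 1 (weighted $L^p$ isometry).} I would first record the effect of $T_\Phi$ on weighted $L^p$ norms. The complex change of variables $\zeta=\Phi(z)$ has real Jacobian $\abs{\det\Phi'(z)}^2 = u(z)^2$, and by definition $v\circ\Phi=u$. A direct computation then yields, for every real $\beta$,
\[ \int_U \bigl|T_\Phi g\bigr|^p\, u^{\beta+2-p}\, dV \,=\, \int_V |g|^p\, v^\beta\, dV. \]
Specializing $\beta=0$ and $\beta=\alpha$ produces two isometric isomorphisms: one from $L^p(V)$ onto $L^p(U,u^{2-p})$, and, restricted to holomorphic functions, one from $A^p(V,v^\alpha)$ onto $A^p(U,u^{2-p+\alpha})$.

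\textbf{Step 2 (intertwining of Bergman projections).} Next I would invoke the classical transformation law for the Bergman kernel,
\[ \mathbb{B}_U(z,w) \,=\, \det\Phi'(z)\cdot \overline{\det\Phi'(w)}\cdot \mathbb{B}_V(\Phi(z),\Phi(w)), \]
which follows from uniqueness of the reproducing kernel applied to the $L^2$-unitary pullback $T_\Phi\colon L^2(V)\to L^2(U)$ (the $p=2$, $\beta=0$ case of Step 1). Substituting this formula into the integral defining $\Bp_U(T_\Phi f)$ and performing the change of variables $\zeta=\Phi(w)$ inside the integral gives the operator identity
\[ \Bp_U \circ T_\Phi \,=\, T_\Phi \circ \Bp_V, \]
initially on $L^2(V)\cap L^p(V)$ and extended by density (compactly supported continuous functions sit in all the weighted $L^p$ spaces involved).

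\textbf{Step 3 (conclusion).} Since $T_\Phi$ is an isometric isomorphism of the source spaces and an isometric isomorphism of the target spaces in Step 1, and it intertwines $\Bp_V$ with $\Bp_U$ by Step 2, the two maps
\[ \Bp_V\colon L^p(V)\to A^p(V,v^\alpha), \qquad \Bp_U\colon L^p(U,u^{2-p})\to A^p(U,u^{2-p+\alpha}) \]
are unitarily conjugate, so one is bounded (respectively, surjective) if and only if the other is. The only delicate bookkeeping is in the exponents in Step 1; this is where the ``$2-p$'' shift between the two formulations genuinely comes from, and essentially everything else reduces to standard facts about the Bergman kernel.
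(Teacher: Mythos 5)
Your proposal is correct and follows essentially the same route as the paper: the weighted change-of-variables isometry for the pullback $T_\Phi$ (the paper's Lemma~\ref{lem-isom}, your Step 1) combined with Bell's intertwining identity $\Bp_U\circ T_\Phi = T_\Phi\circ \Bp_V$, after which the equivalence is read off by conjugation. The only difference is cosmetic: the paper simply cites Bell for the transformation formula rather than rederiving it from the kernel identity, and your phrase ``unitarily conjugate'' should really be ``conjugate by isometric isomorphisms'' since these are Banach, not Hilbert, spaces for $p\neq 2$.
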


For a function $h$  on $V$ define
\begin{equation}\label{eq-tphi}
T_\Phi h = \det(\Phi')\cdot ( h\circ \Phi).
\end{equation}
 Let $\alpha$ be a real number. We first note the following
\begin{lemma}\label{lem-isom}
 The map
\begin{equation}\label{eq-isom1} T_\Phi:L^p\left(V, v^\alpha\right)\to L^p\left(U, u^{2-p+\alpha}\right)\end{equation}
is an isometric isomorphism of Banach spaces.
\end{lemma}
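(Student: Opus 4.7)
The plan is to verify the isometry identity directly by a change-of-variables computation, and then to obtain the inverse map by the analogous pullback along $\Phi^{-1}$. For the norm identity, I would unravel
\[
\|T_\Phi h\|_{L^p(U,u^{2-p+\alpha})}^p = \int_U \abs{\det \Phi'(w)}^p\, \abs{h(\Phi(w))}^p\, u(w)^{2-p+\alpha}\, dV(w),
\]
and use $u = \abs{\det \Phi'}$ to fold $u^p$ into the weight, simplifying the integrand to $\abs{h\circ\Phi}^p\, u^{2+\alpha}$. The next step is to push forward through $\Phi:U\to V$, using that the real Jacobian of a holomorphic map equals $\abs{\det\Phi'}^2 = u^2$, so that $dV(w) = u(w)^{-2}\, dV(\Phi(w))$. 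The relation $u(w) = v(\Phi(w))$ is immediate from the definition of $v$, so the integral becomes
\[
\int_V \abs{h(z)}^p\, v(z)^{2+\alpha}\, v(z)^{-2}\, dV(z) = \|h\|_{L^p(V,v^\alpha)}^p,
\]
establishing the isometry.

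For the isomorphism part, the natural candidate for the inverse of $T_\Phi$ is the analogous map $T_{\Phi^{-1}}$ going the other way, defined by $T_{\Phi^{-1}}g(z) = \det\bigl((\Phi^{-1})'(z)\bigr)\cdot g(\Phi^{-1}(z))$. Swapping the roles of $U$ and $V$ and of $u$ and $v$ in the same computation shows that $T_{\Phi^{-1}}$ is also an isometry, now from $L^p(U, u^{2-p+\alpha})$ to $L^p(V, v^\alpha)$. The holomorphic chain rule $\det(\Phi'(w))\cdot \det\bigl((\Phi^{-1})'(\Phi(w))\bigr) = 1$ then yields $T_\Phi\circ T_{\Phi^{-1}} = \mathrm{Id}$ on $L^p(V,v^\alpha)$ and $T_{\Phi^{-1}}\circ T_\Phi = \mathrm{Id}$ on $L^p(U, u^{2-p+\alpha})$. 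Linearity of $T_\Phi$ is evident from \eqref{eq-tphi}.

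The main obstacle is nothing beyond careful bookkeeping: one must distinguish the holomorphic Jacobian $\det \Phi'$ from the real Jacobian $\abs{\det\Phi'}^2$, and confirm that the exponents $2-p+\alpha$, $p$, $2+\alpha$, $-2$, and $\alpha$ combine correctly under the substitution. The factor $u^{2-p}$ on the source side is precisely designed to absorb the $u^p$ coming from $\abs{\det\Phi'}^p$ together with the $u^{-2}$ coming from the Jacobian of the change of variables, leaving exactly $v^\alpha$ on the target side. Once this accounting is pinned down, the proof is a short direct computation, and no deeper ingredient is needed.
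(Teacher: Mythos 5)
Your computation is correct and is essentially the paper's own proof: the same exponent bookkeeping $u^p\cdot u^{2-p+\alpha}=u^{2+\alpha}$ followed by the change of variables with real Jacobian $\abs{\det\Phi'}^2=u^2$. You additionally spell out the inverse $T_{\Phi^{-1}}$ via the chain rule, which the paper leaves implicit; the only slip is that you have the two composition identities on the wrong spaces ($T_{\Phi^{-1}}\circ T_\Phi$ is the identity on $L^p(V,v^\alpha)$ and $T_\Phi\circ T_{\Phi^{-1}}$ on $L^p(U,u^{2-p+\alpha})$, not the other way around).
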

\begin{proof} Indeed,
\begin{align*}
\norm{T_\Phi f}_{L^p(U, u^{2-p+\alpha})}^p 
&= \int_U \abs{(\det\Phi')\cdot (f\circ \Phi)}^p {\abs{\det\Phi'}}^{2-p+\alpha} dV\\
&= \int_U \abs{f\circ \Phi}^p \abs{\det \Phi'}^\alpha \abs{\det \Phi'}^2 dV\\
&= \int_V \abs{f}^p \abs{\det \Phi'\circ \Phi^{-1}}^\alpha dV,
\end{align*} 
where in the last line we use  the change of variables formula.
\end{proof}

\begin{proof}[Proof of Lemma~\ref{lem-2}]  Recall the Bell transformation formula for the Bergman projection (cf. \cite{bell}), which can be written as
\begin{equation}\label{eq-trans} \Bp_U \circ T_\Phi = T_\Phi\circ \Bp_V,
\end{equation}
So that we have $\Bp_V= T_\Phi^{-1}\circ \Bp_U \circ T_\Phi$. Start with 1$\implies$2.  We know that $T_\Phi$ is an isometry from  $L^p(V)$ to $L^p(U, u^{2-p})$, by hypothesis, $\Bp_U$ is bounded from $L^p(U, u^{2-p})$ to 
$A^p(U,u^{2-p+\alpha})$, and we also know that $T_\Phi^{-1}$ is an isometry from $A^p(U,u^{2-p+\alpha})$ to $A^p(V, v^\alpha)$, so that we have (2). The part 2$\implies$1 can be done exactly the same way.
\end{proof}

\subsection{Proof of Theorem~\ref{thm-bdd}}
We apply Lemma~\ref{lem-2}.
Let $\Phi:\Dd\to \Ht$ be given by
\begin{equation}\label{eq-phi}
\Phi(w_1,w_2) = (w_1,w_1w_2),
\end{equation}
so that in Lemma~\ref{lem-2}, we have
\[ u(w_1,w_2) = \abs{w_1}
\text{ and } v(z_1,z_2)= \abs{z_1}, \]
and we take $\alpha=p-2$. Lemma~\ref{lem-2} shows that the following statements are equivalent:
\begin{enumerate}
\item $\BpHt$ is bounded and surjective from $L^p(\Ht)$ to $A^p(\Ht, \delta_1^{p-2})$.
\item $\BpDd$ is bounded and surjective from $L^p(\Dd,\delta_1^{2-p})$ to $A^p(\Dd)$.
\end{enumerate}
Since the second statement was proved in Proposition~\ref{prop-ddsurjective}, the result follows for $p\geq 2$.


\section{Proof of Corollary~\ref{cor-folk}}
\subsection{Weighted Bergman spaces on the punctured disc}

On the punctured unit disc $\D^*=\set{z\in\cx| 0<\abs{z}<1}$ , denote by $\delta$ the weight function $\delta(z)=\abs{z}$.
\begin{lemma} \label{lem-ap} Let $p \geq 2$  and $-2 < \alpha \leq 0.$ Then we have
\begin{equation}
A^p(\D^*,\delta^\alpha)= A^p(\D),
\end{equation}
and the two norms $\norm{\cdot}_{L^p(\D)}$ and $\norm{\cdot}_{L^p(\D, \delta^\alpha)}$ on this space are equivalent.
\end{lemma}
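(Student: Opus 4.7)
The plan is to identify $A^p(\D^*,\delta^\alpha)$ with $A^p(\D)$, where an element of $A^p(\D)$ is viewed via its restriction to $\D^*$, and to establish comparability of the two norms. I will treat the two inclusions separately; the main work lies in the removable singularity.

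The first and main step is to show that every $f\in A^p(\D^*,\delta^\alpha)$ extends holomorphically across $0$. Since $-2<\alpha\leq 0$ and $|z|<1$ on $\D^*$, we have $\delta^\alpha\geq 1$, hence
\[
\int_{\D^*}|f|^p\,dV\leq \int_{\D^*}|f|^p\delta^\alpha\,dV<\infty.
\]
Because $\D^*$ has finite Lebesgue measure and $p\geq 2$, H\"older's inequality then yields $f\in L^2(\D^*)\cap\mathcal{O}(\D^*)=A^2(\D^*)$. Expanding $f$ in its Laurent series $f(z)=\sum_{k\in\Z}a_kz^k$ on $\D^*$, passing to polar coordinates, and using the orthogonality of $\{z^k\}$ reduces the condition $\|f\|_{L^2(\D^*)}^2<\infty$ to $\sum_{k\in\Z}|a_k|^2\int_0^1 r^{2k+1}\,dr<\infty$. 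The integrals with $k<0$ diverge, forcing $a_k=0$ for all $k<0$, so $f$ extends holomorphically to $\D$. Since $\{0\}$ has measure zero, the extension lies in $A^p(\D)$ and $\|f\|_{L^p(\D)}\leq \|f\|_{L^p(\D^*,\delta^\alpha)}$.

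For the reverse inclusion, let $f\in A^p(\D)$ and split $\D$ into the inner disc $\{|z|\leq 1/2\}$ and the outer annulus $\{1/2<|z|<1\}$. On the outer annulus $\delta^\alpha\leq 2^{-\alpha}$ (since $\alpha\leq 0$), so that portion of the weighted integral is bounded by a constant times $\|f\|_{L^p(\D)}^p$. On the inner disc the sub-mean value inequality for $|f|^p$ bounds $|f|$ pointwise by a universal constant times $\|f\|_{A^p(\D)}$, and then the hypothesis $\alpha>-2$ ensures that $\int_{|z|\leq 1/2}\delta^\alpha\,dV<\infty$, so this portion is controlled as well. Combining the two pieces gives $\|f\|_{L^p(\D^*,\delta^\alpha)}\leq C\|f\|_{L^p(\D)}$, which together with the reverse inequality from the previous paragraph yields the claimed norm equivalence.

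The only delicate point is the removable singularity step, where I reduce from the weighted $L^p$ setting to the unweighted $L^2$ setting. This reduction is possible precisely because $\alpha\leq 0$ makes the weight pointwise at least $1$ on $\D^*$ and because $p\geq 2$ lets H\"older's inequality pass from $L^p$ to $L^2$ on the bounded set $\D^*$; the complementary hypothesis $\alpha>-2$ is used only in the opposite direction, to secure integrability of $\delta^\alpha$ near the puncture.
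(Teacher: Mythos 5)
Your proof is correct and follows essentially the same route as the paper: reduce to the unweighted $A^2(\D^*)$ setting via $\delta^\alpha\geq 1$ and H\"older's inequality, kill the negative Laurent coefficients by the polar-coordinate computation, and handle the reverse inclusion by splitting $\D$ at $\abs{z}=1/2$ and using $\alpha>-2$ for integrability of the weight near the puncture. The only divergence is at the very end: the paper bounds $\abs{f}$ on the inner disc by its supremum there and deduces the norm equivalence from the open mapping theorem, whereas you obtain the quantitative bound $\norm{f}_{L^p(\D^*,\delta^\alpha)}\leq C\norm{f}_{L^p(\D)}$ directly from the sub-mean value inequality, which is a slightly more self-contained finish.
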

\begin{proof}
We first claim that each function in  $A^p(\D^*, \delta^\alpha)$ has a {\em removable} singularity at 0.  Since $-2<\alpha \leq 0$, we have
\begin{equation}\label{eq-wtheta} \int_{\D}\abs{w}^\alpha dV(w) = 2\pi \int_0^1 r^{1+\alpha}dr <\infty,\end{equation}
so that $\delta^\alpha\in L^1(\D)$. It follows by Hölder's inequality that $ A^p(\D^*)\subset A^2(\D^*),$
so that to establish the claim it is sufficient to show that each function in $A^2(\D^*)$  extends holomorphically to 0. Now if $f\in \mathcal{O}(\D^*)$, writing $f(w)=\sum_{\nu=-\infty}^\infty a_\nu w^\nu$, we have $ \int_0^{2\pi}\abs{f(re^{i\theta})}^2d\theta=2\pi \sum_{\nu=-\infty}^\infty \abs{a_\nu}^2r^{2\nu}.$
If $f\in A^2(\D^*)$ we have
\begin{align*}
\norm{f}^2_{A^2(\D^*)}&= \int_{0}^1 \left( \int_0^{2\pi}\abs{f\left(re^{i\theta}\right)}^2d\theta\right)rdr\\
&=\sum_{\nu=-\infty}^\infty \abs{a_\nu}^2\int_{0}^1 r^{2\nu+1}dr,
\end{align*}
by the monotone convergence theorem. Since this is finite, we must have $a_\nu=0$ unless $2\nu+1>-1$, i.e., $\nu\geq 0$, which verifies the claim that 
the singularity of $f$ at 0 is removable.

Since every element of $A^p(\D^*)$ has a removable singularity at 0, it follows that  $A^p(\D^*)= A^p(\D).$
Now since $\alpha\leq 0$, we have $\delta^\alpha \geq 1$ on $\D$, so we
have a continuous embedding
\begin{equation}\label{eq-inclusion} A^p(\D^*,\delta^\alpha) \hookrightarrow A^p(\D^*)= A^p(\D). 
\end{equation}
We claim that the injective map in \eqref{eq-inclusion} is in fact surjective. 
Let  $f\in A^p(\D)$. We have
\begin{align*}
\norm{f}^p_{L^p(\D^*,\delta^\alpha)}&= \int_{\D} \abs{f(w)}^p \abs{w}^\alpha dV(w)\\
&= \int_{\abs{w}<\frac{1}{2}}\abs{f(w)}^p \abs{w}^\alpha dV(w)
+ \int_{\abs{w}\geq \frac{1}{2}}\abs{f(w)}^p \abs{w}^\alpha dV(w)\\
&\leq C \int \abs{w}^\alpha dV(w)+ \frac{1}{2^\alpha}\int_{\abs{w}\geq \frac{1}{2}}\abs{f(w)}^p dV(w),
\end{align*}
The first of these integrals is finite (see \eqref{eq-wtheta}) and so is the second one since by hypothesis $f\in L^p(\D)$. Therefore the map in \eqref{eq-inclusion} is a continuous bijection and the result follows by the open mapping theorem.
\end{proof}
We deduce the following  from Lemma~\ref{lem-ap}:
\begin{proposition} \label{prop-ap}
If $2\leq p < 4$ then 
\[ A^p(\Ht, \delta_1^{p-2}) = A^p(\Ht),\]
and the two norms $\norm{\cdot}_{L^p(\Ht)}$ and $\norm{\cdot}_{L^p(\Ht, \delta_1^\alpha)}$ on this space are equivalent.
\end{proposition}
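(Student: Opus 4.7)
The plan is to reduce the assertion to the one-variable Lemma~\ref{lem-ap} via the biholomorphism $\Phi\colon\Dd\to\Ht$, $\Phi(w_1,w_2)=(w_1,w_1w_2)$, that was used in Section~\ref{sec-mainproof}. With $u(w)=\abs{w_1}$ on $\Dd$ and $v(z)=\abs{z_1}=\delta_1$ on $\Ht$, the transformation operator $T_\Phi$ of \eqref{eq-tphi} is, by Lemma~\ref{lem-isom}, an isometric isomorphism $L^p(\Ht,v^\alpha)\to L^p(\Dd,u^{2-p+\alpha})$ for every real $\alpha$. Taking $\alpha=0$ and $\alpha=p-2$, and restricting to holomorphic subspaces (using that $T_\Phi$ is a bijection on $\mathcal{O}$, since $\det\Phi'=w_1$ does not vanish on $\Dd$), we obtain isometric identifications $A^p(\Ht)\cong A^p(\Dd,\abs{w_1}^{2-p})$ and $A^p(\Ht,\delta_1^{p-2})\cong A^p(\Dd)$, respectively. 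Hence the proposition is equivalent to the claim that $A^p(\Dd)=A^p(\Dd,\abs{w_1}^{2-p})$ with equivalent norms.

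Since $p\geq 2$ and $\abs{w_1}<1$ on $\Dd$, we have $\abs{w_1}^{2-p}\geq 1$, so that $A^p(\Dd,\abs{w_1}^{2-p})\hookrightarrow A^p(\Dd)$ with $\norm{g}_{L^p(\Dd)}\leq\norm{g}_{L^p(\Dd,\abs{w_1}^{2-p})}$. For the reverse inclusion, let $g\in A^p(\Dd)$. By Fubini,
\[
\int_\Dd \abs{g}^p\abs{w_1}^{2-p}\,dV=\int_\D\left(\int_{\D^*}\abs{g(w_1,w_2)}^p\abs{w_1}^{2-p}\,dV(w_1)\right)dV(w_2).
\]
For each $w_2\in\D$, the slice $w_1\mapsto g(w_1,w_2)$ belongs to $\mathcal{O}(\D^*)$, and the hypothesis $2\leq p<4$ translates to $2-p\in(-2,0]$, which is precisely the admissible range in Lemma~\ref{lem-ap}. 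Applying that lemma slice-wise yields a constant $C>0$ independent of $w_2$ with
\[
\int_{\D^*}\abs{g(w_1,w_2)}^p\abs{w_1}^{2-p}\,dV(w_1)\leq C\int_\D\abs{g(w_1,w_2)}^p\,dV(w_1),
\]
and integrating over $w_2\in\D$ (using that $\D^2\setminus\Dd$ has Lebesgue measure zero) gives $\norm{g}_{L^p(\Dd,\abs{w_1}^{2-p})}^p\leq C\norm{g}_{L^p(\Dd)}^p$. This closes the reduced claim and hence the proposition.

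The only delicate step is the numerology: the hypothesis $2\leq p<4$ on $\Ht$ matches exactly the weight range $-2<2-p\leq 0$ required by Lemma~\ref{lem-ap} on the disc, so this approach is intrinsically restricted to $p<4$. The endpoint $p=4$ is genuine---it coincides with the failure of $1/z_1$ to lie in $A^p(\Ht)$ exploited in Theorem~\ref{thm-unbdd} and Corollary~\ref{cor-folk}(b)---so one does not expect to push the identification $A^p(\Ht,\delta_1^{p-2})=A^p(\Ht)$ past $p=4$ by this method.
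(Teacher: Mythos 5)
Your proof is correct and follows essentially the same route as the paper: transfer both spaces to $\Dd$ via the isometries $T_\Phi$ with $\alpha=0$ and $\alpha=p-2$, then identify $A^p(\Dd,\delta_1^{2-p})$ with $A^p(\Dd)$ by applying Lemma~\ref{lem-ap} slice-wise together with Fubini's theorem. You merely write out the Fubini step in more detail than the paper does; the key point you correctly use is that the constant in Lemma~\ref{lem-ap} is uniform over all of $A^p(\D)$, hence independent of the slice parameter $w_2$.
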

\begin{proof} Let $\delta_1$ be as in \eqref{eq-delta1}. Then by an application of Fubini's theorem to the bidisc and Lemma~\ref{lem-ap}, we see that   $A^p(\D^*\times \D, \delta_1^\alpha)= A^p(\D^2),$  with equivalence of norms, 
provided $p\geq 2$ and $-2<\alpha\leq 0$. In particular, if $2\leq p<4$,  then  $-2<2-p\leq 0$, so that we have
\begin{equation}\label{eq-ap}
 A^p(\D^*\times \D, \delta_1^{2-p})= A^p(\D^2).
\end{equation}
Let the map $\Phi:\Dd\to \Ht$ be given by \eqref{eq-phi}, and let $T_\Phi$ be as in \eqref{eq-tphi}. Noting that $T_\phi$ maps holomorphic 
functions to holomorphic functions, we see from Lemma~\ref{lem-isom} that for each real $\alpha$, $T_\Phi: A^p(\Ht,\delta_1^\alpha)\to A^p(\Dd, \delta_1^{2-p+\alpha})$ is an isometric isomorphism of Banach spaces.
 Letting $\alpha=0$, we see that $T_\Phi: A^p(\Ht)\to A^p(\Dd, \delta_1^{2-p})$ is an isometric isomorphism, and
 letting $\alpha=p-2$, we see that $T_\Phi:A^p(\Ht,\delta_1^{p-2})\to A^p(\Dd)$ is also an isometric isomorphism.  But we already saw that 
 $A^p(\Dd, \delta_1^{2-p})$ and $ A^p(\Dd)$ are the same space of functions on $\Dd$, and the two Banach-space norms are equivalent.
 The result follows.
 \end{proof}
\subsection{Proof of Corollary~\ref{cor-folk}} For part (a), we combine Theorem~\ref{thm-bdd} with Proposition~\ref{prop-ap}. By the former,  if $2\leq p <4$, the operator $\BpHt$ is bounded from $L^p(\Ht)$ to $A^p(\Ht, \delta_1^{p-2})$ and by the latter, the space $A^p(\Ht, \delta_1^{p-2})$ is the same as  $A^p(\Ht)$ as a topological vector space,  therefore the result follows  for $2\leq p <4$.  Applying Lemma~\ref{lem-duality} with $\omega\equiv 1$,  we obtain that $\BpHt$ is bounded (and hence surjective) from $L^p(\Ht)$ to $A^p(\Ht)$ when $\frac{4}{3}<p<2$.

We showed in Section~\ref{sec-dd} above that $\BpHt $ is bounded and {\em surjective} from $L^p(\Ht)$ onto 
$A^p(\Ht,\delta_1^{p-2})$. Consequently to prove part (b) of Corollary~\ref{cor-folk} it suffices to show that, for $p\geq 4$, there is a function $f\in A^p(\Ht,\delta_1^{p-2})$ which is not in $A^p(\Ht)$. Such a function is $f(z_1,z_2)= \frac{1}{z_1}$. The computation leading to \eqref{eq-a4} shows that $f\not\in A^p(\Ht)$ if $p\geq 4$. On the other hand, by a direct computation;
\begin{align*}\norm{f}_{L^p(\Ht,\delta_1^{p-2})}^p &=\int_\Ht \frac{1}{\abs{z_1}^p} \abs{z_1}^{p-2}dV\\
&=4\pi^2 \int_{r_1=0}^{1} \int_{r_2=0}^{r_1} \frac{1}{r_1^2}r_1r_2 dr_1 dr_2\\
&=2\pi^2,
\end{align*}
so that the result follows.


\bibliographystyle{alpha}
\bibliography{Lp}

\end{document}